\documentclass[11pt]{amsart}
\usepackage{geometry}                
\geometry{letterpaper}                   
\usepackage{graphicx}
\usepackage{amssymb,amsmath}
\usepackage{epstopdf}
\usepackage{tikz}
\usepackage{lscape}
\usepackage{color}
\usepackage{todonotes}
\usepackage{mathrsfs}
\usepackage{longtable}


\theoremstyle{plain}
\newtheorem{theorem}{Theorem}[section]

\newtheorem{corollary}[theorem]{Corollary}
\newtheorem{proposition}[theorem]{Proposition}

\theoremstyle{definition}
\newtheorem{definition}[theorem]{Definition}
\newtheorem{example}[theorem]{Example}
\newtheorem{remark}[theorem]{Remark}

\newcommand{\A}{\mathcal{A}}
\newcommand{\B}{\mathcal{B}}

\newcommand{\Z}{\mathbb{Z}}

\newcommand{\rk}{\mathrm{rk}}
\newcommand{\calP}{\mathcal{P}}

\newcommand{\Fl}{\mathcal{F}l}

\newcommand{\gc}{\ensuremath{\chi}}
\newcommand{\gd}{\ensuremath{\delta}}

\newcommand{\gf}{\ensuremath{\varphi}}

\newcommand{\gm}{\ensuremath{\mu}}

\newcommand{\gz}{\ensuremath{\zeta}}




\DeclareGraphicsRule{.tif}{png}{.png}{`convert #1 `dirname #1`/`basename #1 .tif`.png}

\title{Partial flag incidence algebras}
\author{Max Wakefield}
\thanks{Supported by the Simons Foundation, the Office of Naval Research, and the Japanese Society for the Promotion of Science.}
\address{US Naval Academy\\
  572-C Holloway Rd\\
  Annapolis MD, 21402 USA}
\email[]{wakefiel@usna.edu}

\begin{document}


\maketitle

\begin{abstract} The $n^{th}$ partial flag incidence algebra of a poset $P$ is the set of functions from $P^n$ to some ring which are zero on non-partial flag vectors. These partial flag incidence algebras for $n>2$ are not commutative, not unitary, and not associative. However, partial flag incidence algebras contain generalized zeta, delta, and M\"obius functions which are finer and more delicate invariants than their classical analogues.  We also study some generalized characteristic polynomials of posets which are not evaluations of Tutte polynomials and compute them for Boolean lattices. Motivation for this work came from studying the matroid Kazhdan-Lusztig polynomials where partial flag Whitney numbers play a central role.\end{abstract}

\vspace{10pt}
{\bf Keywords:} incidence algebras; Whitney numbers; arrangements of hyperplanes

\section{Introduction}

The classical incidence algebras are functions from the set of intervals in a poset to some ring. These algebras contain some of the most studied invariants for posets, matroids, and graphs like the chromatic polynomial and the M\"obius function. They are an important source of associative, unital, but non-commutative algebras. In this note we define and study a generalization of the classic incidence algebras. The algebras we study are functions from the set of ordered sequences (aka flags) in a poset to some ring. These algebras contain many interesting invariants including generalizations of the classical M\"obius functions and characteristic polynomials. 

The product we use for these algebras is non-trivial. We modeled it after both the product in the classical incidence algebras and the higher dimensional integral convolution product in functional analysis. This product is complicated and results in these algebras being non-associative and non-unital. At a first glance this seems to be bad news. On the other hand as long as the coefficient ring is commutative they do satisfy a product/tensor or K\"unneth formula. Without this formula computations on infinite families of posets would be very difficult. 

There are quite a few generalizations of incidence algebras, see \cite{MR14,V07,R86,HS89}. Also, there are studies of certain subalgebras of the classical incidence algebras, for example the finitary incidence algebras studied in \cite{KKW-19}. The algebras considered here seem to be new. Koszulness of incidence algebras was studied in \cite{RS10}. The classical incidence algebras have a nice Hopf algebra structure (see \cite{Dur86}). Another important result for incidence algebras is the classification of reduced incidence algebras in \cite{Kr81}. Recently Khrypchenko studied the algebras defined here and has shown that they essentially contain all the information of the underlying poset (see \cite{khryp-21}). These results warrant future investigations into these properties for these generalized incidence algebras.

A central reason for this study is to lay the foundation for studying various generalized invariants for posets, lattices, and matroids. All of Section \ref{whitneysec} is devoted to studying generalized M\"obius functions. There we show that these generalized M\"obius functions satisfy generalizations of some of the results for classical M\"obius functions. In particular, in Theorem \ref{cross-cut}, we show that one of our flag M\"obius functions satisfies a generalized version of Rota's Cross-cut Theorem (see \cite{rota64}). One aim is to develop these invariants for future studies on generalized Tutte polynomials.

To conclude this note we study a partial flag version of the characteristic polynomial. If the poset is the graph partitions poset of a graph then this gives a generalized chromatic polynomial. So, far this polynomial seems to be very mysterious. First we compute these polynomials for the Boolean lattice. Then we use this to demonstrate that in general these polynomials will not satisfy any kind of deletion-contraction formula. Hence these polynomials are new invariants and not evaluations of Tutte polynomials. Various generalizations of characteristic polynomials have been considered in the past. For example those studied in \cite{Chen10}, but these are evaluations of Tutte polynomials.

Another motivation for this work comes from computing the matroid Kazhdan-Lusztig polynomials defined in \cite{EPW15}. In computing these polynomials one is naturally led to counting partial flags in posets. This was further refined in \cite{PXY,W17}. It turns out that these polynomials give the Betti numbers for the intersection cohomology of the reciprocal plane if the matroid was realizable. For non-realizable matroids it was proved by Braden, Huh, Matherne, Proudfoot, and Wang in \cite{BHMPW-20-1} and \cite{BHMPW-20-2} building off of work by Huh and Wang \cite{HW17} that the coefficients of these polynomials is always non-negative. The flag Whitney number formula in \cite{W17} together with the non-negativity result of Braden, Huh, Matherne, Proudfoot, and Wang shows some very interesting properties of a matroid. One aim of this work is to find more interesting inequalities concerning number of certain types of flats of matroids.

This article is organized as follows. In Section \ref{fiadef} we present the main definition of a partial flag incidence algebra and certain canonical special elements. Then we use these elements to show that these algebras are not necessarily associative or unital. In Section \ref{whitneysec} we define some generalized M\"obius functions and flag Whitney numbers of the first and second kind and compute them for the family of Boolean posets. There we also examine these generalized M\"obius functions in the light of the classical results on the classical M\"obius unctions, in particular Rota's cross cut theorem. In Section \ref{Char_k} we define a flag version of the classic characteristic polynomial, compute it for the Boolean posets, and then show that it is not a Tutte invariant (i.e. does not satisfy a deletion contraction formula).

\

\noindent {\bf Acknowledgments:}
The author is grateful to the University of Oregon, Centro di Ricerca Matematica Ennio De Giorgi, and Hokkaido University for their hospitality during which some of this work was conducted. For helpful contributions the author would also like to thank Will Traves, David Joyner, David Phillips, Graham Denham, Michael Falk, Sergey Yuzvinsky, Hal Schenck, Takuro Abe, Hiroaki Terao, Masahiko Yoshinaga, June Huh, and Emanuele Delucchi.

\section{Flag incidence algebras}\label{fiadef}

For the definition of incidence algebras we need to restrict to the class of locally finite posets $\calP$. This means that any interval $[a,b]=\{x\in \calP|a\leq x\leq b\}$ is a finite set. Here is the main definition of this note.

\begin{definition}

Let $\calP$ be a locally finite poset. The \emph{$n^{th}$ flag incidence algebra} on $\calP$, for $n\geq 2$, with coefficients in a ring $R$ is $\mathcal{I}^n(\calP,R)$ (we will suppress the $R$ when it is clear from context) the set of all functions $f:\Fl^n(\calP)\to R$ where $$\Fl^n(\calP)=\{(X_1,\dots ,X_n)\in \calP^n| X_1\leq X_2\leq \cdots \leq X_n\}.$$ Addition in $\mathcal{I}^n(\calP)$ is given by $$(f+g)(X_1,\dots ,X_n)=f(X_1,\dots ,X_n)+g(X_1,\dots ,X_n)$$ and multiplication is given by a convolution $$(f*g)(X_1,\dots ,X_n)=\sum\limits_{X_i\leq Y_i\leq X_{i+1}}\hspace{-.5cm}f(X_1,Y_1,Y_2,\dots ,Y_{n-1})g(Y_1,Y_2,\dots ,Y_{n-1},X_n) $$ where the juxtaposition above is just multiplication in the ring $R$.

\end{definition}

The usual incidence algebra is the case of $n=2$. The multiplication in this definition is in some sense the most important part of the paper. The idea is to generalize the usual incidence algebra to flags with length longer than 2 and in some way mimic the classical convolution in functional analysis as an integral. Using this multiplication leads to two major problems:

\begin{enumerate}

\item The algebras in general are not associative (see Proposition \ref{notassoc} below).

\item The algebras in general are not unital (see Proposition \ref{notunital} below).

\end{enumerate}

However, the aim of this note is to show that these algebras still have some worth when studying posets. In particular, we can easily generalize some classical invariants which contain richer and finer information. First we define some particularly important elements of $\mathcal{I}^n(\calP)$ in order to study the basic algebraic structure of the algebras $\mathcal{I}^n(\calP)$.

\begin{definition}

\begin{enumerate}

\item For an ordered subset $I=\{i_1,\dots ,i_s\}\subseteq [n]$ the \emph{piece-wise delta function} is $\gd_I\in \mathcal{I}^n(\calP)$ defined by $$\gd_I(X_1,\dots ,X_n)=\begin{cases} 1 &  X_{i_1}=X_{i_2}=\cdots =X_{i_s}\\
0 &else\end{cases}.$$ (We will usually suppress the set distinguishing brackets on $I$.)

\item For subset of flags $S\subseteq \Fl^n(\calP)$ the \emph{characteristic function} $C_S\in \mathcal{I}^n(\calP)$ with respect to $S$ is defined by $$C_S(X_1,\dots ,X_n)=\begin{cases}1 & (X_1,\dots ,X_n)\in S\\
0& else\\
\end{cases}.$$

\item The $k^{th}$-\emph{zeta} function $\gz_k$ on $\calP$ is the constant function 1 on $\Fl^n(\calP)$, so for all $(X_1,\dots ,X_k)\in \Fl^n(\calP)$ $$\gz_k (X_1,\dots ,X_k)=1 . $$

\end{enumerate}
\end{definition}

\begin{remark}

Note that we can think of $\zeta_k=\gd_{1}$. And if $n=2$ then the usual delta function is $\delta=\gd_{[2]}$ in our notation.

\end{remark}

In the next section we will define the partial flag version of the classical M\"obius function. For now we can use the delta, zeta and characteristic functions to examine basic algebraic properties of the $n^th$ incidence algebras.


\begin{proposition}\label{notassoc}

The incidence algebra $\mathcal{I}^n(\calP)$ is not associative for $n>2$ and non-trivial $\calP$. 

\end{proposition}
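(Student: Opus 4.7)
The plan is to exhibit an explicit counterexample on the two-element chain $\calP=\{0<1\}$ using $f=g=h=\gz_n$. First I would expand $\gz_n*\gz_n$ on an arbitrary flag. Since $\gz_n\equiv 1$, the convolution collapses to a count,
\[
(\gz_n*\gz_n)(X_1,\dots,X_n)=\prod_{i=1}^{n-1}|[X_i,X_{i+1}]|,
\]
which on the two-element chain equals $2$ when the flag contains both a $0$ and a $1$, and $1$ when the flag is constant.

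Next I would evaluate both triple products at the flag $F=(0,0,\dots,0,1)$ with a single $1$ only in the last slot. The constraints $X_i\leq Y_i\leq X_{i+1}$ force $Y_1=\cdots=Y_{n-2}=0$ in the outer sum and leave $Y_{n-1}\in\{0,1\}$ free, so both associations reduce to a sum of just two terms. A direct check yields
\[
((\gz_n*\gz_n)*\gz_n)(F)=(\gz_n*\gz_n)(0,\dots,0)+(\gz_n*\gz_n)(0,\dots,0,1)=1+2=3,
\]
while
\[
(\gz_n*(\gz_n*\gz_n))(F)=(\gz_n*\gz_n)(0,\dots,0,0,1)+(\gz_n*\gz_n)(0,\dots,0,1,1)=2+2=4.
\]
Since $3\neq 4$, associativity fails for every $n>2$.

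The main obstacle is purely notational: the asymmetric way in which the outputs of $f$ overlap with the inputs of $g$ in the convolution makes it easy to miscount which of the intermediate $Y_i$ are forced and at which position the nontrivial interval of size $2$ appears. Verifying the $n=3$ case in detail and then observing that prepending zeros to the flag does not alter the structure of the outer sum makes the extension to general $n$ transparent.
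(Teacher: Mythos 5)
Your proof is correct, and it takes a genuinely different route from the paper's. The paper tests associativity on the triple $\gd_{1,2}$, $\gd_{2,3}$, $\gz_n$, computing both associations as counts of partial flags and observing that one count is strictly smaller than the other whenever $X_{n-1}<X_n$; the appearance of $\gd_{2,3}$, which only exists for $n>2$, is how that argument localizes the failure to $n>2$. You instead use the single function $\gz_n$ and a fully explicit numerical evaluation on the two-element chain. Your key identity $(\gz_n*\gz_n)(X_1,\dots ,X_n)=\prod_{i=1}^{n-1}|[X_i,X_{i+1}]|$ is right: the chain conditions on the $Y_i$ are automatic from $Y_i\leq X_{i+1}\leq Y_{i+1}$, so the index set of the convolution really is the full product of intervals. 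The two evaluations at $(0,\dots ,0,1)$ then check out ($1+2=3$ versus $2+2=4$, with the discrepancy appearing only when $n\geq 3$, since for $n=2$ the right-hand association gives $2+1=3$). What your approach buys is concreteness and economy --- one function, one flag, integers $3\neq 4$ --- and it is not actually less general than the paper's: the same computation runs verbatim at any covering pair $X_1\lessdot X_n$ in an arbitrary poset, since then $[X_1,X_n]$ has exactly two elements, which matches the scope of the paper's hypothesis $X_{n-1}<X_n$ (both proofs tacitly exclude antichains, for which $\I^n(\calP)$ is associative). Both arguments ultimately exploit the same asymmetry of the convolution, namely that $X_n$ is fed only to the right factor and $X_1$ only to the left, so the failure surfaces exactly when the flag jumps in its final slot.
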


\begin{proof}

To do this we use the functions $\gd_{1,2}$, $\gd_{2,3}$, and $\gz:=\gz_n$. First we compute \begin{multline}\label{ri1}((\gd_{1,2}*\gd_{2,3})*\gz )(X_1,\dots ,X_n)=\sum\limits_{X_i\leq Y_i\leq X_{i+1}}\gd_{1,2}*\gd_{2,3}(X_1,Y_1,\dots ,Y_{n-1})\\ =\sum\limits_{X_i\leq Y_i\leq X_{i+1}}\Bigg[\sum\limits_{Y_{i-1}\leq Z_i\leq Y_{i}}\gd_{1,2}(X_1,Z_1,Z_2,\dots ,Z_{n-1})\gd_{2,3}(Z_1,Z_2,Z_3,\dots ,Z_{n-1},Y_{n-1})\Bigg]\\ =\sum\limits_{X_i\leq Y_i\leq X_{i+1}}|[Y_3,Y_4,\dots ,Y_{n-1}]|\end{multline} where $[Y_3,\dots ,Y_{n-1}]=\{ [s_1,\dots ,s_{n-3}]\in \Fl^{n-3}| Y_{i+2}\leq s_i\leq Y_{i+3}\}$. Now we compute \begin{multline}\label{ri2} (\gd_{1,2}*(\gd_{2,3}*\gz ))(X_1,\dots ,X_n)=\sum\limits_{X_i\leq Y_i\leq X_{i+1}}(\gd_{2,3}*\gz)(X_1,Y_2,\dots ,Y_{n-1},X_n)\\
=\sum\limits_{X_i\leq Y_i\leq X_{i+1}}\left[\sum\limits_{\substack{Y_{i-1}\leq Z_i\leq Y_{i}\\ Y_{n-1}\leq Z_{n-1}\leq X_n}}\gd_{2,3}(X_1,Z_1,\dots ,Z_{n-1})\right] \\
=\sum\limits_{X_i\leq Y_i\leq X_{i+1}}|[Y_3,\dots ,Y_{n-1},X_n]| .\end{multline} Note that the element $\gd_{2,3}$ is not even defined unless $n>2$. As long as $X_{n-1}<X_{n}$ we have that (\ref{ri1}) is strictly less than (\ref{ri2}) for $n>2$ and this finishes the proof.\end{proof}

Next we will prove that $\mathcal{I}^n$ is not unital for $n>2$. 

\begin{proposition}\label{notunital}

The incidence algebra $\mathcal{I}^n(\calP)$ does not have a one sided unit for $n>2$ and non-trivial $\calP$. 

\end{proposition}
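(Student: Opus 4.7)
The plan is to assume a left unit $e\in\mathcal{I}^n(\calP)$ exists (the right-unit case being completely analogous) and derive the contradiction $0=1$ by feeding carefully chosen characteristic functions into the equation $e*f=f$ evaluated at specific partial flags. Work in the non-trivial case that $\calP$ contains a strictly comparable pair $X<Y$; if no such pair exists then $\calP$ is an antichain, the convolution reduces to pointwise multiplication, and the statement is vacuous (indeed false). For each $j=0,1,\dots,n$, let $F_j\in\Fl^n(\calP)$ denote the two-block partial flag with $j$ copies of $X$ followed by $n-j$ copies of $Y$.

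For fixed $k\in\{1,\dots,n-1\}$, the constraints $X_i\le Y_i\le X_{i+1}$ in the convolution defining $(e*f)(F_k)$ force $Y_i=X$ for $i<k$ and $Y_i=Y$ for $i>k$, so the sum collapses to a single free intermediate variable $Y_k$ ranging over $[X,Y]$. I will feed in two test functions. With $f=C_{F_k}$, only the summand $Y_k=X$ yields a nonzero factor of $f$, and a direct index count shows the corresponding $e$-argument is $F_{k+1}$; matching with $f(F_k)=1$ forces $e(F_{k+1})=1$. With $f=C_{F_{k-1}}$ instead, only the summand $Y_k=Y$ survives, the $e$-argument becomes $F_k$, and the right-hand side $f(F_k)=0$ forces $e(F_k)=0$.

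Letting $k$ range over $\{1,\dots,n-1\}$, the first family of equations gives $e(F_j)=1$ for $j=2,\dots,n$, while the second gives $e(F_j)=0$ for $j=1,\dots,n-1$. As soon as $n>2$ these ranges overlap (say at $j=2$), producing $0=1$ and ruling out a left unit. The right-unit case proceeds symmetrically: in $(f*e)(F_k)=f(F_k)$ the $e$- and $f$-argument patterns are shifted by one slot, and the same flags $F_k$ tested against $C_{F_k}$ versus $C_{F_{k+1}}$ yield $e(F_j)=1$ for $j=0,\dots,n-2$ together with $e(F_j)=0$ for $j=1,\dots,n-1$, clashing again for $n>2$. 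The only real difficulty is bookkeeping, since in $e*f$ the $e$-arguments are $(X_1,Y_1,\dots,Y_{n-1})$ while the $f$-arguments are $(Y_1,\dots,Y_{n-1},X_n)$, shifted by one position; the two-block flags $F_j$ are tailored precisely so that after this shift the arguments one cares about always land back in the family $\{F_j\}$, which keeps the final comparison a clean index count.
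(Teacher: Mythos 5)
Your proof is correct, and it reaches the contradiction by a route that is close in spirit to the paper's but different in execution. The paper also evaluates the unit axiom on two-block flags, but its test functions are the piece-wise delta functions $\gd_{1,n}$ and $\gd_{n-2,n-1}$ together with the zeta function: it first shows $u(X_1,\dots,X_1,X_n)=0$ and $u(X_1,\dots,X_1,X_n,X_n)=0$ whenever $X_1<X_n$, and then gets $0=1$ from $(\gz*u)(X_1,\dots,X_1,X_n)$ under the extra hypothesis that $X_n$ \emph{covers} $X_1$, so that the interval $[X_1,X_n]$ has exactly two elements and the convolution sum has exactly two (vanishing) terms. You instead test against characteristic functions $C_{F_k}$ of single flags, which collapses the convolution to one term every time and produces the directly incompatible assignments $e(F_j)=1$ and $e(F_j)=0$ on the overlapping range $j\in\{2,\dots,n-1\}$; this needs only some comparable pair $X<Y$ rather than a covering pair, and it makes transparent why the argument fails at $n=2$ (the two ranges just miss each other, consistent with $\gd_{[2]}$ being the unit of the classical incidence algebra). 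Your opening caveat is also a fair one: if $\calP$ is an antichain then $\Fl^n(\calP)$ consists of constant flags, convolution is pointwise multiplication, and $\gz_n$ is a two-sided unit, so the proposition implicitly assumes $\calP$ has at least one strict comparability --- an assumption the paper's proof also uses (via the covering pair) without stating.
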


\begin{proof}

Suppose there was a right unit $u\in \mathcal{I}^n(\calP)$. Compute \begin{equation}\label{ud}(\gd_{1,n}*u)(X_1,\dots ,X_n)=u(X_1,X_1,X_1,\dots ,X_1,X_n)=\begin{cases}1 & X_1=X_n\\
0 & else\end{cases} \end{equation} and \begin{equation}\label{ud2}(\gd_{n-2,n-1}*u)(X_1,\dots ,X_1,X_n ,X_n)=u(X_1,X_1,\dots ,X_1,X_n,X_n)=\begin{cases}1 & X_1=X_n\\
0 & else\end{cases} .\end{equation}
Note that the last expression (\ref{ud2}) is only defined when $n>2$. Also, suppose that $X_n$ covers $X_1$, so that $X_1<X_n$ and there does not exist $Y$ such that $X_1<Y<X_n$. Then both expressions (\ref{ud}) and (\ref{ud2}) are zero. But $$\gz *u(X_1,\dots ,X_1,X_1,X_n)=\sum\limits_{X_1\leq Y\leq X_n}u(X_1,\dots ,X_1,Y,X_n)$$ $$=u(X_1,\dots ,X_1,X_n)+u(X_1,\dots ,X_1,X_n,X_n)=1$$ which is a contradiction. The left side is similar it just uses the computations \\
$u*\gd_{1,n}(X_1,\dots ,X_n)$, $u*\gd_{2,3}(X_1,X_1,X_n,\dots ,X_n)$, and $u*\gz (X_1,X_n,\dots ,X_n)$.\end{proof}

So far this all seems to be bad news for these higher dimensional incidence algebras. But we do have a very natural product formula generalizing the usual formula (see Proposition 2.1.12 in \cite{SO97}).

\begin{theorem}\label{prodI}

If $P$ and $Q$ are finite posets and the coefficient ring $R$ is commutative then $$\mathcal{I}^n(P\times Q,R)\cong \mathcal{I}^n(P,R)\otimes_R \mathcal{I}^n(Q,R).$$

\end{theorem}

\begin{proof}

We define a map $\gf : \mathcal{I}^n(P,R)\otimes_R \mathcal{I}^n(Q,R)\to \mathcal{I}^n(P\times Q,R)$ on simple tensors by $$\gf (g\otimes h) ((X_1,Y_1),\dots ,(X_n,Y_n))=g(X_1,\dots ,X_n)h(Y_1,\dots ,Y_n) $$ where the product on the right hand side is just multiplication in the ring $R$. First we show that $\gf$ is a ring homomorphism. To shorten the notation let $(\bar{X},\bar{Y}):=((X_1,Y_1),\dots ,(X_n,Y_n))$. On simple tensors \begin{align*}\gf((e\otimes f)\cdot (g\otimes h))(\bar{X},\bar{Y}) &=\gf (e*_Pg\otimes f*_Qh)(\bar{X},\bar{Y})\\
&=(e*_Pg)(\bar{X},\bar{Y})(f*_Qh)(\bar{X},\bar{Y})\\
&=\Bigg( \sum\limits_{\bar{X}\leq \bar{W}}e(\bar{W})g(\bar{W})\Bigg)\Bigg(\sum\limits_{\bar{Y}\leq \bar{Z}}f(\bar{Z})h(\bar{Z})\Bigg)\\
&=\sum\limits_{\bar{X}\leq \bar{W}}\sum\limits_{\bar{Y}\leq \bar{Z}}e(\bar{W})f(\bar{Z})g(\bar{W})h(\bar{Z})\\
&= \sum\limits_{\bar{X}\leq \bar{W}} \sum\limits_{\bar{Y}\leq \bar{Z}}\gf(e\otimes f)(\bar{W},\bar{Z})\gf(g\otimes h)(\bar{W},\bar{Z})\\
&=\big( \gf (e\otimes f )*_{P\times Q}\gf (g\otimes h)\big)(\bar{X},\bar{Y}) \\
\end{align*} where in the third, fourth, and fifth lines above the inequality $\bar{X}\leq \bar{W}$ means the usual $X_1\leq W_1\leq X_2\leq W_2\leq \cdots \leq W_{n-1}\leq X_n$ and $e(\bar{W})g(\bar{W})$ means the usual $$e(X_1,W_1,\dots ,W_{n-1})g(W_1,\dots ,W_{n-1},X_m) .$$ Then a routine check on non-simple tensors shows $\gf$ is a ring homomorphism.

The fact that $\gf $ is injective is nearly a tautology. Surjectivity is more interesting and there we use the finiteness hypothesis of $P$ and $Q$. Let $f\in \mathcal{I}^n(P\times Q,R)$. Then define $$F=\sum\limits_{\bar{X}\in \Fl^n(P)}\sum\limits_{\bar{Y}\in \Fl^n(Q)}f((X_1,Y_1),\dots ,(X_n,Y_n))C_{(X_1,\dots ,X_n)}\otimes C_{(Y_1,\dots ,Y_n)} $$ which is well defined since the posets are finite and $F\in  \mathcal{I}^n(P,R)\otimes_R \mathcal{I}^n(Q,R)$. Since $\gf (C_{(X_1,\dots ,X_n)}\otimes C_{(Y_1,\dots ,Y_n)})=C_{((X_1,Y_1),\dots ,(X_n,Y_n))}$ we have that $\gf(F)=f$. \end{proof}



\section{Partial flag M\"obius functions}\label{whitneysec}

Again we assume that $\calP$ is a locally finite poset. In order to save some notational space we will denote elements in $\Fl^k(\calP)$ by $\overline{X}=(X_1,\dots ,X_k)$. There is a natural partial order on $\Fl^k(\calP)$ given by $\overline{X} \preceq \overline{Y}$ if for all $1\leq i\leq k$ we have $X_i\leq Y_i$ where $\leq $ is the order in $\calP$. Then the pair $(\Fl^k(\calP), \preceq)$ is a partially ordered set. We define another kind of relation between $\Fl^{k-1}(\calP)$ and $\Fl^k(\calP)$ in order to make our definition of various partial flag M\"obius functions concise. For $\overline{Y}\in \Fl^{k-1}(\calP)$ and $\overline{X}\in \Fl^k(\calP)$ we say $\overline{Y} \trianglelefteq \overline{X}$ if and only if $$X_1\leq Y_1\leq X_2 \leq Y_2\leq X_3\leq \cdots \leq X_{k-1}\leq Y_{k-1} \leq X_k.$$ Using this we define a few different possible generalizations of the classical M\"obius function.

\begin{definition}\label{Mob-def}
The $k^{th}$ \emph{left partial flag M\"obius function} on $\calP$ is $\gm_k^l:\Fl^k(\calP)\to \Z$ recursively defined for all $\overline{X}\in \Fl^k(\calP)$ by  $$\sum\limits_{\overline{Y}\trianglelefteq \overline{X}}\gm_k^l (X_1,Y_1,\dots ,Y_{k-1})=\gd_{[k]}(\overline{X}).$$ The $k^{th}$ \emph{right partial flag M\"obius function} on $\calP$ is $\gm_k^r:\Fl^k(\calP)\to \Z$ recursively defined for all $\overline{X}\in \Fl^k(\calP)$ by  $$\sum\limits_{\overline{Y}\trianglelefteq \overline{X}}\gm_k^r (Y_1,\dots ,Y_{k-1},X_k)=\gd_{[k]}(\overline{X}).$$ The $k^{th}$ \emph{product partial flag M\"obius function} on $\calP$ is $\gm_k^p:\Fl^k(\calP)\to \Z$ defined by $$\gm_k^p (\overline{X})=\prod\limits_{i=1}^{k-1} \gm_2^l(X_i,X_{i+1}).$$
\end{definition}

Multiple remarks are in order. The delta function above can be thought of the standard delta function $\gd_{X_1,\dots ,X_k}$ it just checks to see if all the $X_i$ are equal. The definitions for the left and right partial flag M\"obius functions make sense because of the fact that $(\Fl^k(\calP),\preceq)$ is a partially ordered set. The left partial flag M\"obius function can be equivalently defined as the element $a$ such that $a*\zeta_k =\gd_{[k]}$ (this is why we use the terminology of left and right). Similarly right partial flag M\"obius function can be equivalently defined as the element $b$ such that $\zeta_k *b =\gd_{[k]}$. Since $\gd_{[k]}$ is not a unit and $\mathcal{I}^k(\calP,R)$ is not commutative there is no reason why we should expect the functions of Definition \ref{Mob-def} to be the same. In the next example we show that for $k>2$ all the functions of Definition \ref{Mob-def} are different even for a very ``symmetric" and small poset. This is in strict contrast to the classical case $k=2$ where $\gm_2^l=\gm_2^r=\gm_2^p$. Also, note that for the product definitions case we could have used $\gm_2^r$ instead of $\gm_2^l$ since they are the same. We also remark that in the case of a poset $\calP$ with a minimal element $\hat{0}$, all of these functions are different than the ``one variable'' classical M\"obius function (by this we mean $\gm_2^l(\hat{0},\overline{X})$) on the partial flag poset $(\Fl^k(\calP),\preceq)$.

\begin{example}\label{3rank2}

Let $\calP$ be the poset in Figure \ref{3R2} (it can be thought of in many ways: the partition poset on three elements, the dim 2 braid matroid lattice of flats, the graph partition poset of the complete graph with 3 vertices, etc). Then $\mu_3^l (0,a,1)=-2$ and hence $\mu_3^l (0,1,1)=4$. But $\mu^r_3(0,1,1)=2$. Then we note that $\gm_3^l(0,a,a)=1$, $\gm_3^r(0,0,a)=1$, but $\mu_3^p(0,a,a)=-1=\mu_3^p(0,0,a)$. Also, we compute the classical M\"obius function on the partial flag poset. In this case we have $\mu_2^l((0,0,0),(0,a,a))=0$ but none of the function values $\mu_3^l(0,a,a)$, $\mu_3^r(0,a,a)$, or $\mu_3^p(0,a,a)$ are zero. This example comes up also as the counter-example to a generalized deletion-restriction formula for the higher characteristic polynomials. \begin{figure}\begin{tikzpicture}
\draw[thick] (0,0)--(1,1)--(0,2)--(0,0)--(-1,1)--(0,2);
\fill  (0,0) circle (2pt);
\fill  (0,2) circle (2pt);
\fill  (0,1) circle (2pt);
\fill  (1,1) circle (2pt);
\fill  (-1,1) circle (2pt);
\node at (-2.5,1) {$\calP=$};
\node[right] at (.1,0) {0};
\node[right] at (.1,2) {1};
\node[left] at (-1.1,1) {$a$};
\node[right] at (.1,1) {$b$};
\node[right] at (1.1,1) {$c$};

\end{tikzpicture}\caption{A poset where the left and right M\"obius functions are different.}\label{3R2}\end{figure}
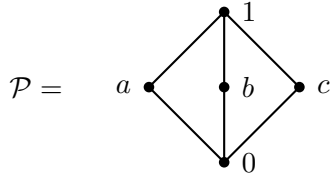

\end{example}

The fact that $\mu_k^l$, $\gm_k^r$ and $\mu^p_k$ are different seems to indicate the complexity of the invariants studied here. We choose to study $\mu_k^l$ in order to examine it's possible utility and use it in a generalization of characteristic polynomials presented in Section \ref{Char_k}. From now on we refer to $\gm_k^l$ as just $\gm_k$. Next we study $\mu_k^p$ in Subsection \ref{Mob-counting} and show that it satisfies a generalized Cross-cut theorem. For this paper use $\mu_k$ to define some fundamental invariants given by these generalizes functions. 


Using the functions $\gm_k$ and $\gz_k$ we can define multi-indexed Whitney numbers. A good reference for classical Whitney numbers is \cite{A87} and these were generalized to 2 subscripts in \cite{GZ83}.

\begin{definition}\label{whitney-numb}
Let $\calP$ be a ranked locally finite poset. Let $I=\{i_1,\dots ,i_k\}$ be an ordered $k$-tuple such that for all $j$, $i_j\in \{0,1,2,\dots, \rk \calP\}$.
\begin{enumerate}
\item The \emph{multi-indexed Whitney numbers of the first kind} are $$w_I(\calP)=\sum\limits \gm_k (X_1,X_2,\dots ,X_k)$$ where the sum is over all $k$-tuples $(X_1,\dots ,X_k)$ where $X_1\leq X_2 \leq \cdots \leq X_k$ and for all $j\in [k]$, $\rk X_j=i_j$.

\item The \emph{multi-indexed Whitney numbers of the second kind} are $$W_I(\calP)=\sum\limits \gz_k (X_1,X_2,\dots ,X_k)$$ where the sum is over all $k$-tuples $(X_1,\dots ,X_k)$ where $X_1\leq X_2 \leq \cdots \leq X_k$ and for all $j\in [k]$, $\rk X_j=i_j$.

\end{enumerate}
When the context of the poset is clear we will just write $W_I$ instead of $W_I(\calP)$.
\end{definition}

\begin{remark}

Note that the original Whitney numbers, $w_i$, of the first kind in our notation are $w_{0,i}$. The Whitney numbers of the second kind have the property that some indices are trivial in the sense that with or without these indices we have the same value. For example, if $\calP$ is a lattice then $W_{0,i}=W_i$.

\end{remark}

\begin{remark}

Let $\calP$ be a rank $n$ poset with $\hat{0}$ and $\hat{1}$. Let $[n]=\{0,1,\dots ,n\}$. We can make a function $\alpha:[n]\to \Z$ defined by $\alpha(I)=W_I$. The function $\alpha$ is called the \emph{flag f-vector} in the literature (see \cite{Karu06} and \cite{Stan94}). This function is used in the original definition of the so called \emph{cd-index} of the poset $\calP$. It would be interesting to know if there were any connection between some of the facts presented in this study and various results on the cd-index.

\end{remark}

\subsection{Boolean Lattices}\label{boolat}

Let $\B_n$ be the rank $n$ Boolean lattice. In the following lemma we compute the $k^{th}$ M\"obius function on elements in $\B_1=\{0,1\}$. 

\begin{proposition}\label{boolmob}

If $X_1\leq \cdots \leq X_k\in \B_1$ then $$\mu_k (X_1,\dots ,X_k)=\left\{ \begin{array}{ccc} (-1)^{\rk(X_1)+\cdots +\rk (X_k)} & \ \ \ & \text{if } (X_1,\dots ,X_k)\neq (1,1,\dots ,1)\\
1 & & \text{else}
\end{array}\right. .$$ 

\end{proposition}

\begin{proof}
Since as a set we can write $\B_1=\{0 ,1\}$ we have that the only possible M\"obius values are $$b_i:=\gm_k(0,\dots ,0,1,\dots ,1)$$ where there are $i$ 1's. Starting at the bottom of the flags we have that $b_0=1$ by definition. Then the indices to sum over in the recursive definition of $\gm_k$ give that  $0=b_i+b_{i-1}=b_i+(-1)^{i-1}$ as long as $i<k$. Hence for $i<k$ we have $b_i=(-1)^i$. The case $i=k$ is by definition.\end{proof}

\begin{remark} Note that for even $k$ the formula $\mu_k(\B_1)(X_1,\dots ,X_k)=(-1)^{\rk(X_1)+\cdots +\rk (X_k)}$ holds for all possible $(X_1,\dots ,X_k)$ by Proposition \ref{boolmob}. However, it does not if $k$ is odd. This is more evidence that $\gm_k$ holds some subtle information and for higher rank Boolean lattices the odd M\"obius functions $\gm_{2k+1}$ are more complicated.
\end{remark}

Combining Theorem \ref{prodI} and Proposition \ref{boolmob} we get the following.

\begin{proposition}\label{mu-bool-n}

If $\B_n$ is the rank $n$ Boolean lattice, $k$ is even and $(X_1,\dots , X_k)\in \Fl^k(\B_n)$ then $\gm_k(X_1,\dots , X_k)=(-1)^{\rk(X_1)+\cdots +\rk (X_k)}$.

\end{proposition}

Summing all these values over certain ranks can yield a result about the partial flag Whitney numbers of a Boolean lattice.

\begin{corollary}

If $k$ is even and $I=(i_1,\dots ,i_k)$ then $w_I(\B_n)=(-1)^{i_1+\cdots +i_k}W_I(\B_n)$.

\end{corollary}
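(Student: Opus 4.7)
The plan is to observe that the sign appearing in Proposition \ref{boolmob} depends only on the ranks of the flag entries, and that on the index set defining $w_I(\B_n)$ those ranks are prescribed by the tuple $I$. Hence the M\"obius value is constant across the sum, and pulling it out leaves a counting problem that by definition is $W_I(\B_n)$.

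More precisely, I would expand the definition
\[
w_I(\B_n)=\sum \mu_k(X_1,\dots ,X_k),
\]
where the sum ranges over $(X_1,\dots,X_k)\in \Fl^k(\B_n)$ with $\rk X_j = i_j$ for each $j$. Applying Proposition \ref{boolmob} to each summand yields
\[
\mu_k(X_1,\dots ,X_k)=(-1)^{\rk(X_1)+\cdots+\rk(X_k)}=(-1)^{i_1+\cdots+i_k},
\]
a quantity independent of the particular flag. Factoring this sign out of the sum gives
\[
w_I(\B_n)=(-1)^{i_1+\cdots+i_k}\sum 1,
\]
where the remaining sum is still over flags with prescribed rank sequence $I$.

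Finally, because $\zeta_k\equiv 1$ on $\Fl^k(\B_n)$, the trailing sum is exactly $W_I(\B_n)$ by definition. Combining these identities gives the asserted equality $w_I(\B_n)=(-1)^{i_1+\cdots+i_k}W_I(\B_n)$. There is no real obstacle here: the entire argument is a one-step corollary of Proposition \ref{boolmob}, the only subtlety being to notice that pinning down the ranks in the summation index is precisely what makes the sign constant on the sum.
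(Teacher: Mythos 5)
Your proof is correct and is exactly the intended argument: the paper states this as an immediate corollary of Proposition \ref{boolmob} without writing out a proof, and your observation that the sign $(-1)^{\rk(X_1)+\cdots+\rk(X_k)}$ is constant on flags with rank sequence $I$, so it factors out of the sum leaving the count $W_I(\B_n)$, is precisely the one-step derivation being invoked.
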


Next we compute these numbers for the Boolean lattice. For any ranked poset $\calP$ we let $\calP(j)=\{X\in \calP|\rk(X)=j\}$.

\begin{proposition}\label{W(B_n)}

If $I=(i_1,\dots ,i_k)$ then $$W_I(\B_n)={n\choose i_1,i_2-i_1,i_3-i_2,\dots ,i_k-i_{k-1},n-i_k} .$$

\end{proposition}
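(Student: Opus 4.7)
The plan is to give a direct bijective counting argument, since by definition $W_I(\B_n)$ is simply the number of chains $X_1 \leq \cdots \leq X_k$ in $\B_n$ with $\rk X_j = i_j$ (the zeta function contributes a factor of $1$ to each term). In $\B_n$, the order relation is set inclusion, so such a chain is equivalently a nested sequence $X_1 \subseteq X_2 \subseteq \cdots \subseteq X_k \subseteq [n]$ with $|X_j| = i_j$.

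The key observation I would exploit is that such a nested chain corresponds uniquely to an ordered partition of $[n]$ into $k+1$ (possibly empty) blocks
\[
X_1,\; X_2 \setminus X_1,\; X_3 \setminus X_2,\; \ldots,\; X_k \setminus X_{k-1},\; [n]\setminus X_k
\]
whose sizes are exactly $i_1,\ i_2 - i_1,\ \ldots,\ i_k - i_{k-1},\ n - i_k$. The inverse map is clear: given an ordered partition into blocks $B_1, \ldots, B_{k+1}$ of the prescribed sizes, set $X_j = B_1 \cup \cdots \cup B_j$. Counting ordered partitions of an $n$-set into blocks of specified sizes is the standard combinatorial meaning of the multinomial coefficient, which delivers the claimed formula.

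An alternative route, which I would mention as a sanity check, is to invoke the product formula of Proposition \ref{prodI} together with $\B_n \cong \B_1^{\times n}$ and induct on $n$, exactly in the spirit of the second proof of Proposition \ref{boolmob}. At the base case $\B_1$ the flag count with prescribed ranks is always $1$, and each successive tensor factor multiplies the number of chains in the manner that reproduces the multinomial recursion.

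I do not anticipate a significant obstacle: the hardest thing to verify is that the bijection described above is well-defined and a bijection, but both directions are immediate from the definition of set inclusion, so the argument should be short and self-contained.
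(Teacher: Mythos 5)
Your proposal is correct and is essentially the same argument as the paper's: the paper counts the chains sequentially, choosing $X_1$ in $\binom{n}{i_1}$ ways and then each $X_{j+1}\supseteq X_j$ in $\binom{n-i_j}{i_{j+1}-i_j}$ ways, which is exactly the standard expansion of your bijection with ordered partitions into blocks $X_1, X_2\setminus X_1,\dots,[n]\setminus X_k$. The only difference is presentational (one bijection stated at once versus a telescoping product of binomial coefficients), so there is nothing to add.
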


\begin{proof}

We start at the bottom of the chain $i_1\leq \cdots \leq i_k$. There are exactly ${n\choose i_1}$ elements of rank $i_1$ in $\B_n$ (i.e. $|\B_n(j)|={n\choose j}$). Then for any $X\in \B_n(i_1)$ the restriction to $X$ is $\B_n^X\cong \B_{n-i_1}$ and the elements above $X$ of rank $i_2$ in $\B_n$ are now of rank $i_2-i_1$ in $\B_n^X$. So, for every $X\in \B_n(i_1)$ the number of elements above it is ${n-i_1\choose i_2-i_1}$. In general for every $Y\in \B_N(i_j)$ there are ${n-i_j\choose i_{j+1}-i_j}$ above it in $\B_n(i_{j+1})$. Hence $$W_I(\B_n)={n\choose i_1}{n-i_2\choose i_2-i_1}\cdots {n-i_{k-1}\choose i_k-i_{k-1}}={n\choose i_1,i_2-i_1,i_3-i_2,\dots ,i_k-i_{k-1},n-i_k}.$$\end{proof}

\subsection{A counting formula for matroids}\label{Mob-counting} One reason that the classical M\"obius function is so useful is that it can be computed many different ways. In this subsection we generalize one of these classical formulas to the setting of these partial flag M\"obius functions. There are multiple motivations for the formula we present below and we plan to use this formula to examine a relationship to a generalized Tutte polynomial in a subsequent study. Suppose that $M$ is a matroid and $L_M=L(\A)$ is its associated lattice of flats with set of atoms $\A$. For $(X_1,\dots, X_k)\in  \Fl^k(L(\A))$ set $$S(X_1,\dots, X_k)= \left\{ (B_1,\dots,B_{k-1})\in \left[2^\A\right]^{k-1}|\forall i,\ X_i\subseteq B_i \text{ and } \bigvee B_i=X_{i+1} \right\} $$ where $2^\A$ is the power set of $\A$ and $\bigvee B_i$ means the flat which is the join of the atoms in $B_i$. Now the following generalizes a special version of Rota's Cross-cut Theorem (Theorem 3 in \cite{rota64} and also see Theorem 1.1 in \cite{BlasSag-97}) where we generalize the notation in Lemma 2.35 of \cite{OT} for ease of notation.

\begin{theorem}\label{cross-cut}

For any geometric lattice $L(\A)$ with set of atoms $\A$ and all $\overline{X}=(X_1,\dots, X_k)\in \Fl^k(L(
\A))$

$$\mu_k^p(X_1,\dots, X_k)=\sum\limits_{\overline{B}\in S(\overline{X})} \prod\limits_{i=1}^{k-1}(-1)^{|B_i\backslash X_i|}=\sum\limits_{\overline{B}\in S(\overline{X})} (-1)^{\sum\limits_{i=1}^{k-1} |B_i\backslash X_i|}$$

where $\overline{B}=(B_1,\dots ,B_{k-1})$.

\end{theorem}

\begin{proof}

We do this by induction on $k$. The base case, $k=2$, is classical (see \cite{rota64}) where the cross-cut set is the set of atoms in the interval lattice $[X_1,X_2]$. Then for $k>2$ and all $\overline{X} \in \Fl^k(L(\A))$ we compute 
\begin{equation}\label{prodmob1}
\sum\limits_{\overline{B}\in S(\overline{X})} \prod\limits_{i=1}^{k-1}(-1)^{|B_i\backslash X_i|}   =\sum\limits_{\substack{B_1\supseteq X_1 \\ \bigvee B_1 =X_2}} \sum\limits_{\substack{B_2\supseteq X_2 \\ \bigvee B_2 =X_3}} \cdots \sum\limits_{\substack{B_{k-1}\supseteq X_{k-1} \\ \bigvee B_{k-1} =X_k}} \prod\limits_{i=1}^{k-1}(-1)^{|B_i\backslash X_i|}. \end{equation}
Since the last summation in (\ref{prodmob1}) only has summation variable $B_{k-1}$ we can factor out all the other $B_i$ terms for $i<k-1$ to get (\ref{prodmob1}) as
\begin{equation}\label{prodmob2} = \sum\limits_{\substack{B_1\supseteq X_1 \\ \bigvee B_1 =X_2}} \sum\limits_{\substack{B_2\supseteq X_2 \\ \bigvee B_2 =X_3}} \cdots \sum\limits_{\substack{B_{k-2}\supseteq X_{k-2} \\ \bigvee B_{k-2} =X_{k-1}}} \prod\limits_{i=1}^{k-2}(-1)^{|B_i\backslash X_i|} \left[ \sum\limits_{\substack{B_{k-1}\supseteq X_{k-1} \\ \bigvee B_{k-1} =X_k}} (-1)^{|B_{k-1}\backslash X_{k-1}|}\right] \end{equation}
Then again by Theorem .... in \cite{} (\ref{prodmob2}) becomes 
\begin{equation}\label{prodmob3} = \sum\limits_{\substack{B_1\supseteq X_1 \\ \bigvee B_1 =X_2}} \sum\limits_{\substack{B_2\supseteq X_2 \\ \bigvee B_2 =X_3}} \cdots \sum\limits_{\substack{B_{k-2}\supseteq X_{k-2} \\ \bigvee B_{k-2} =X_{k-1}}} \prod\limits_{i=1}^{k-2}(-1)^{|B_i\backslash X_i|} \mu_2 (X_{k-1},X_k) \end{equation}
Condensing (\ref{prodmob3}) and factoring out the last term (\ref{prodmob3}) becomes
\begin{equation}\label{prodmob4}
= \mu_2 (X_{k-1},X_k) \sum\limits_{(B_1,\ldots,B_{k-2})\in S(X_1,\ldots , X_{k-1})} \prod\limits_{i=1}^{k-2}(-1)^{|B_i\backslash X_i|}. 
\end{equation}
Using the induction hypothesis (\ref{prodmob4}) is now \begin{equation}\label{prodmob5} 
= \mu_2 (X_{k-1},X_k) \mu_{k-1}^p(X_1,\ldots ,X_{k-1})
\end{equation} and by definition of $\mu_{k-1}^p$ (\ref{prodmob5}) is $=\mu_2 (X_{k-1},X_k) \mu_{2}^p(X_1,X_2) \cdots \mu_2(X_{k-2},X_{k-1})$. \end{proof}

%
%
%

\section{Generalized Characteristic polynomials}\label{Char_k}

Using the Whitney numbers from Definition \ref{whitney-numb} we can define generalized characteristic polynomials.

\begin{definition}\label{defchar}

The $k^{th}$ {\emph partial flag characteristic} polynomial of a finite ranked poset $\calP$ with rank $r$ and smallest element $\hat{0}$ is 

$$\chi_k(\calP,{\bf t})=\sum\limits_{|I|=k}w_{\{\hat{0}\}\cup I}{\bf t}^{r-I}$$ 

where ${\bf t}^{r-I}=t_1^{r-i_1}t_2^{r-i_2}\dots t_k^{r-i_k}$ when $I=\{i_1,\dots, i_k\}$. For an arrangement $\A$ we define the $k^{th}$ characteristic polynomial $$\chi_k(\A,{\bf t})=\sum\limits_{(X_1,\dots ,X_k)\in \Fl^k(L(\A))}\hspace{-1cm}\mu_{k+1}(\hat{0},X_1,\dots ,X_k)t_1^{\dim X_1}\cdots t_k^{\dim X_k}.$$

\end{definition}

\begin{remark}

The usual characteristic polynomial of a poset $\calP$ is exactly $\chi_1(\calP,t)=\chi (\calP,t)$.

\end{remark}

We get the following result from viewing $\chi_k$ as an element of the incidence algebra $\mathcal{I}^k(\calP,\Z [t_1,\dots ,t_k])$ and Theorem \ref{prodI}.

\begin{proposition}\label{prodchar}

If $P$ and $Q$ are finite posets then $$\chi_k(P\times Q;t_1,\dots ,t_k)=\chi_k(P;t_1,\dots ,t_k)\chi_k(Q;t_1,\dots ,t_k).$$

\end{proposition}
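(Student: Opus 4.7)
My plan is to promote the product formula of Proposition \ref{prodI} from the incidence algebra to the Möbius function itself, and then push it through the definition of $\chi_k$. The key intermediate claim is that for any $((X_0,Y_0),\dots,(X_k,Y_k))\in \Fl^{k+1}(P\times Q)$,
\[
\gm_{k+1}^{P\times Q}((X_0,Y_0),\dots,(X_k,Y_k)) = \gm_{k+1}^P(X_0,\dots,X_k)\,\gm_{k+1}^Q(Y_0,\dots,Y_k).
\]
To prove this I would compute directly that the isomorphism $\gf$ from Proposition \ref{prodI} sends $\gz_{k+1}^P\otimes \gz_{k+1}^Q\mapsto \gz_{k+1}^{P\times Q}$ and $\gd_{[k+1]}^P\otimes \gd_{[k+1]}^Q\mapsto \gd_{[k+1]}^{P\times Q}$; the first is immediate since all three zeta functions equal the constant $1$, and the second is the observation that $(X_i,Y_i)$ are all equal iff the $X_i$ are all equal and the $Y_i$ are all equal. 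Since $\gf$ is a ring homomorphism,
\[
\gf(\gm_{k+1}^P\otimes \gm_{k+1}^Q)*\gz_{k+1}^{P\times Q}=\gf\big((\gm_{k+1}^P*\gz_{k+1}^P)\otimes (\gm_{k+1}^Q*\gz_{k+1}^Q)\big)=\gd_{[k+1]}^{P\times Q}.
\]
The recursive defining equation $\nu*\gz_{k+1}=\gd_{[k+1]}$ together with $\nu(\vec X,\vec X,\dots,\vec X)=1$ determines $\nu$ uniquely by inducting on the length of a flag, so $\gf(\gm_{k+1}^P\otimes \gm_{k+1}^Q)=\gm_{k+1}^{P\times Q}$, which is precisely the displayed multiplicativity.

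With multiplicativity of $\gm_{k+1}$ established, I would finish by direct substitution into the definition of $\chi_k$. The minimum of $P\times Q$ is $(\hat 0_P,\hat 0_Q)$ and flags in $\Fl^{k+1}(P\times Q)$ starting at this element are in bijection with pairs of flags in $P$ and $Q$ starting at $\hat 0_P$ and $\hat 0_Q$ respectively. Because the product poset is ranked with $\rk_{P\times Q}(X,Y)=\rk_P X+\rk_Q Y$, every monomial weight factors as $t_j^{\rk(X_j,Y_j)}=t_j^{\rk X_j}\,t_j^{\rk Y_j}$. Thus
\[
\chi_k(P\times Q,\mathbf{t}) = \sum_{(\vec X,\vec Y)} \gm_{k+1}^P(\hat 0,\vec X)\,\gm_{k+1}^Q(\hat 0,\vec Y)\prod_{j=1}^{k}t_j^{\rk X_j}t_j^{\rk Y_j},
\]
and separating the $X$- and $Y$-sums gives exactly $\chi_k(P,\mathbf{t})\,\chi_k(Q,\mathbf{t})$.

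The main obstacle is the first step, and it is a genuine subtlety: the earlier propositions show that $\mathcal{I}^{k+1}(\calP,R)$ is neither associative nor unital for $k\geq 2$, so one cannot simply declare $\gm$ the inverse of $\gz$ in a monoid and transport inverses across the isomorphism $\gf$. The way around this is that the recursion for $\gm$ is nevertheless an honest unique determination: the value on a flag $(X_0,\dots,X_k)$ is forced by the values on strictly shorter (in terms of the interval $[X_0,X_k]$) diagonal-seeded flags, so uniqueness of the solution to $\nu*\gz_{k+1}=\gd_{[k+1]}$ with $\nu\equiv 1$ on diagonals is automatic. Once this observation is in hand the ring-homomorphism property of $\gf$ does the rest of the work, and the remainder of the proof is a routine factoring of a summation.
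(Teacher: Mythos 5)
Your argument is correct and follows essentially the same route the paper intends: the paper derives this proposition directly from Proposition \ref{prodI} by viewing $\chi_k$ inside $\I^{k+1}(\calP,\Z[t_1,\dots,t_k])$, and the multiplicativity $\gm_{k+1}^{P\times Q}=\gf(\gm_{k+1}^P\otimes\gm_{k+1}^Q)$ you establish is exactly the consequence of Proposition \ref{prodI} the paper itself invokes (e.g.\ in the proof of Proposition \ref{boolmob}). Your explicit treatment of the uniqueness of the solution to $\nu*\gz_{k+1}=\gd_{[k+1]}$, needed because the algebra is non-associative and non-unital, is a welcome detail that the paper leaves implicit.
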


Using this we can compute the characteristic polynomial of the Boolean lattices. Let $\B_n$ be the Boolean lattice of rank $n$. Again we use the product formula on $\B_n\cong (\B_1)^n$.

For $1\leq i k$ let $\tilde{w}_i=w_{0,\dots ,0,1,\dots ,1}(\B_1)$ where there are $i$ 1's. Then by Proposition \ref{boolmob} we have that $\tilde{w}_i=(-1)^i$. Thus \begin{align}\chi_k(\B_1;t_1,\dots ,t_k)&=\sum\limits_{i=0}^k(-1)^i\prod\limits_{j=1}^{k-i}t_j\label{cB_1} \\ 
&=t_1(t_2(\cdots(t_{k-1}(t_k-1)+1)\cdots +(-1)^{k-1})+(-1)^k.\label{fB_1}
\end{align} Applying Proposition \ref{prodchar} to Equations (\ref{cB_1}) and (\ref{fB_1}) we get the following proposition. 

\begin{proposition}\label{booleanform}
The characteristic polynomials of the Boolean matroids are
\begin{align*}\chi_k(\B_n;t_1,\dots ,t_k)&=\Bigg( \sum\limits_{i=0}^k(-1)^i\prod\limits_{j=1}^{k-i}t_j\Bigg)^n\\
&=\Big(t_1(t_2(\cdots(t_{k-1}(t_k-1)+1)\cdots )+(-1)^{k-1})+(-1)^k\Big)^n\\ 
\end{align*} where the last product term in the summation ($i=k$) is 1.
\end{proposition}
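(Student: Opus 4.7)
The plan is to combine the product decomposition $\B_n \cong \B_1^n$ with Proposition \ref{prodchar}. By induction on $n$, Proposition \ref{prodchar} immediately gives
\[
\chi_k(\B_n; t_1, \dots, t_k) = \chi_k(\B_1; t_1, \dots, t_k)^n,
\]
so the entire proposition reduces to identifying $\chi_k(\B_1; t_1, \dots, t_k)$ with the two equivalent expressions in (\ref{cB_1}) and (\ref{fB_1}).

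To establish (\ref{cB_1}), I would enumerate $k$-flags in $\B_1$ directly. Since $|\B_1| = 2$, any flag $X_1 \le X_2 \le \cdots \le X_k$ is completely determined by the number $i$ of entries equal to $\hat{1}$, where $0 \le i \le k$. For this flag, Proposition \ref{boolmob} (equivalently, the computation $\tilde w_i = (-1)^i$ recalled just before the proposition) yields $\mu_{k+1}(\hat{0}, X_1, \dots, X_k) = (-1)^i$, and the associated monomial $\prod_{j} t_j^{\rk X_j}$ collapses to $\prod_{j=1}^{k-i} t_j$ once one reads off which coordinates carry a factor of $t$. Summing the contributions for $i = 0, 1, \dots, k$ produces exactly the right-hand side of (\ref{cB_1}). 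The nested Horner-style rewriting (\ref{fB_1}) is then obtained from (\ref{cB_1}) by successively factoring $t_1$, $t_2$, \ldots out of the remaining tail, which is a purely formal manipulation of the polynomial.

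With $\chi_k(\B_1)$ in hand, the proof concludes by raising to the $n$-th power: applying $\chi_k(P \times Q) = \chi_k(P)\chi_k(Q)$ of Proposition \ref{prodchar} inductively along the decomposition $\B_n \cong \B_1 \times \B_1 \times \cdots \times \B_1$ gives both displayed formulas in the proposition simultaneously, one from (\ref{cB_1}) and one from (\ref{fB_1}).

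The main obstacle is really only bookkeeping: one must be careful that the convention for ${\bf t}^I$ in Definition \ref{defchar} assigns exponents in the order needed to recover $\prod_{j=1}^{k-i} t_j$ rather than $\prod_{j=k-i+1}^{k} t_j$, and one must check that the base case $n = 0$ (the one-point poset with $\chi_k \equiv 1$) is compatible with the $n$-th power formula. Neither of these requires a genuinely new idea beyond what is already packaged in Propositions \ref{boolmob} and \ref{prodchar}.
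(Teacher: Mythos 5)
Your proposal is correct and follows essentially the same route as the paper: the paper likewise computes $\chi_k(\B_1)$ from Proposition \ref{boolmob} (the values $\tilde w_i=(-1)^i$) to obtain (\ref{cB_1}) and (\ref{fB_1}), and then raises to the $n$-th power via Proposition \ref{prodchar} applied to $\B_n\cong(\B_1)^n$. The one bookkeeping point you flag --- that the exponent convention in Definition \ref{defchar} must be read so that a flag with $i$ top elements contributes $\prod_{j=1}^{k-i}t_j$ rather than $\prod_{j=k-i+1}^{k}t_j$ --- is indeed the only subtlety, and the paper passes over it silently as well.
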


Using Proposition \ref{boolmob}, Proposition \ref{W(B_n)}, and the Proposition \ref{booleanform} we get a formula for multinomial coefficients.

\begin{corollary} If $k$ is even

\begin{multline*}\chi_k(\B_n;t_1,\dots ,t_k)=\Big(t_1(t_2(\cdots(t_{k-1}(t_k-1)+1)\cdots +(-1)^{k-1})+(-1)^k\Big)^n\\ 
=\sum\limits_{\{i_1,\dots ,i_k\}\subset [n] }(-1)^{i_1+\cdots +i_k}{n\choose i_1,i_2-i_1,i_3-i_2,\dots ,i_k-i_{k-1},n-i_k}t_1^{i_1}\cdots t_k^{i_k} .\end{multline*}

\end{corollary}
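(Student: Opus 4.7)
The plan is to compare two expressions for the $k$th generalized characteristic polynomial $\chi_k(\B_n;t_1,\dots,t_k)$. The left-hand side of the claimed identity is exactly the expression supplied by Proposition \ref{booleanform}, so the task reduces to showing that the right-hand side is the direct evaluation of the definition $\chi_k(\B_n;{\bf t})=\sum_{|I|=k}w_{\{0\}\cup I}(\B_n)\,{\bf t}^I$.

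First I would unpack the definition: for an index set $I=\{i_1,\dots,i_k\}\subseteq[n]$, the coefficient of $t_1^{i_1}\cdots t_k^{i_k}$ in $\chi_k(\B_n;{\bf t})$ is $w_{\{0,i_1,\dots,i_k\}}(\B_n)$. By the corollary to Proposition \ref{boolmob}, we have
\[
w_{\{0,i_1,\dots,i_k\}}(\B_n)=(-1)^{0+i_1+\cdots+i_k}\,W_{\{0,i_1,\dots,i_k\}}(\B_n)=(-1)^{i_1+\cdots+i_k}\,W_{\{0,i_1,\dots,i_k\}}(\B_n).
\]
Next, invoking Proposition \ref{W(B_n)} with the leading $0$ absorbed (the $0$-slot contributes a trivial binomial factor $\binom{n}{0}=1$), this Whitney number equals the multinomial coefficient $\binom{n}{i_1,i_2-i_1,\dots,i_k-i_{k-1},n-i_k}$. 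Assembling the terms gives precisely the right-hand side of the claimed identity.

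Finally, I would cite Proposition \ref{booleanform} to identify this sum with the nested product on the left-hand side, yielding the equality. The only delicate bookkeeping step is checking that prepending $0$ to the index set $I$ in $W_{\{0\}\cup I}(\B_n)$ does not alter the multinomial (because $\binom{n}{0,i_1,i_2-i_1,\dots,n-i_k}=\binom{n}{i_1,i_2-i_1,\dots,n-i_k}$), and that the sign is governed by $i_1+\cdots+i_k$ rather than any contribution from the index $0$. There is no serious obstacle; the result is a direct comparison of coefficients once both descriptions of $\chi_k(\B_n;{\bf t})$ are in hand.
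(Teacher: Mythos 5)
Your proposal is correct and follows exactly the route the paper intends: the paper's ``proof'' is simply the citation of Propositions \ref{boolmob}, \ref{W(B_n)}, and \ref{booleanform}, and you have filled in precisely those steps (equating the two descriptions of $\chi_k(\B_n;\mathbf{t})$, converting $w_{\{0\}\cup I}$ to $(-1)^{i_1+\cdots+i_k}W_{\{0\}\cup I}$, and identifying the Whitney number with the multinomial coefficient, with the harmless prepended $0$). No differences worth noting.
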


Next, using Proposition \ref{booleanform}, we note that $k^{th}$ generalized characteristic polynomial for the the Boolean poset satisfies a nice identity relating to the classical characteristic polynomial.

\begin{corollary}

The characteristic polynomials of the Boolean matroids satisfy

$$\chi_k(\B_n;t_1,\dots ,t_k)=(-1)^n\chi_1(\B_n;-(\chi_k(\B_1;t_1,\dots ,t_k)+(-1)^{k-1})).$$

\end{corollary}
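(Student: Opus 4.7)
The plan is a direct algebraic substitution using the explicit formulas packaged in Proposition \ref{booleanform}. First I would invoke that proposition in two ways: it gives $\chi_k(\B_n;t_1,\dots,t_k)=A^n$ where $A:=\chi_k(\B_1;t_1,\dots,t_k)$, and specialized to $k=1$ it gives $\chi_1(\B_n;s)=(s-1)^n$ as a one-variable polynomial. Together these two specializations reduce the corollary to a polynomial identity purely in $A$ and $n$.

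Next I would carry out the substitution $s=-(A+(-1)^{k-1})$ into $(s-1)^n$:
$$\chi_1\bigl(\B_n;\,-(A+(-1)^{k-1})\bigr)=\bigl(-A-(-1)^{k-1}-1\bigr)^n=(-1)^n\bigl(A+(-1)^{k-1}+1\bigr)^n,$$
so that the leading $(-1)^n$ in the corollary cancels the outer sign, leaving $\bigl(A+(-1)^{k-1}+1\bigr)^n$ on the right. The identity thereby reduces, at the level of what sits inside the $n$-th power, to matching $A+(-1)^{k-1}+1$ against $A$.

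To finish I would exploit the nested form of $A$ given in the second line of Proposition \ref{booleanform}, which exhibits the recursion
$$\chi_k(\B_1;t_1,\dots,t_k)=t_1\cdot\chi_{k-1}(\B_1;t_2,\dots,t_k)+(-1)^k$$
and in particular displays the constant term of $A$ as $(-1)^k$. The offset $(-1)^{k-1}=-(-1)^k$ is chosen precisely to strip this constant term, so that $A+(-1)^{k-1}=t_1\cdot\chi_{k-1}(\B_1;t_2,\dots,t_k)$ is divisible by $t_1$. Combining this observation with the $+1$ contributed by $(s-1)^n$ yields the required comparison between the two sides.

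The main obstacle will be the sign bookkeeping. Three different signs collide in the substitution --- the leading $(-1)^n$, the $-1$ hiding inside $(s-1)^n$, and the offset $(-1)^{k-1}$ attached to $A$ --- and they must all be reconciled against the constant term $(-1)^k$ of $A$ coming from its nested form. Once this bookkeeping is organized, the argument collapses to the one-line polynomial equality above and no further combinatorial input is required; the product formula of Proposition \ref{prodchar}, implicit in Proposition \ref{booleanform}, is doing all of the real work.
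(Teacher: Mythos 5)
Your reduction is the right one---and, since the paper gives no written proof of this corollary, it is surely the intended one: Proposition \ref{booleanform} gives $\chi_k(\B_n;t_1,\dots,t_k)=A^n$ with $A=\chi_k(\B_1;t_1,\dots,t_k)$ and $\chi_1(\B_n;s)=(s-1)^n$, so the corollary is equivalent to the one-line polynomial identity $A^n=\bigl(A+(-1)^{k-1}+1\bigr)^n$, exactly as you derive. But your final step does not close, and in fact cannot: you need $A+(-1)^{k-1}+1=A$, i.e.\ $(-1)^{k-1}=-1$, which holds only for even $k$. Your observation that the offset $(-1)^{k-1}$ strips the constant term $(-1)^k$ of $A$, so that $A+(-1)^{k-1}=t_1\cdot\chi_{k-1}(\B_1;t_2,\dots,t_k)$, is true but beside the point; what is needed is not divisibility by $t_1$ but the exact cancellation $(-1)^{k-1}+1=0$. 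For odd $k$ the two sides of your ``required comparison'' differ by $2$, and the corollary as printed is then false: already for $k=1$, $n=1$ the left side is $t_1-1$ while the right side is $-\chi_1(\B_1;-t_1)=t_1+1$; for $k=3$, $n=1$ the right side is $A+2$.

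The statement that your method actually proves is $\chi_k(\B_n;t_1,\dots,t_k)=(-1)^n\chi_1\bigl(\B_n;\,1-\chi_k(\B_1;t_1,\dots,t_k)\bigr)$, i.e.\ the offset should be $-1$ independently of $k$ (which coincides with $(-1)^{k-1}$ exactly when $k$ is even). With that correction the substitution finishes immediately: $(-1)^n\bigl((1-A)-1\bigr)^n=(-1)^n(-A)^n=A^n$. So the gap is not in your approach but in asserting that the sign bookkeeping works out for the printed offset: a complete write-up would have exposed that the identity it reduces to fails for odd $k$, and either restricted to even $k$ or corrected the offset.
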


Now we show that these higher characteristic polynomials do not satisfy a deletion-restriction formula for hyperplane arrangements. First we examine the Boolean formula to deduce what a deletion restriction formula would look like. Let $\B_n$ be the Boolean arrangement in a vector space of rank $n$ and $B_n'$ and $B_n''$ be the deletion and restriction respectively by one of the hyperplanes. Note that the deletion of the Boolean arrangement, $\B_n'$, is a Boolean arrangement of rank $n-1$ but it is just embedded in one higher dimension than needed. The restricted Boolean arrangement, $\B_n''$, is also Boolean of rank $n-1$. So, \begin{equation}\label{b'}\chi_k(\B_n';t_1,\dots ,t_k)=t_1\cdots t_k \Big(t_1(t_2(\cdots(t_{k-1}(t_k-1)+1)\cdots +(-1)^{k-1})+(-1)^k\Big)^{n-1}\end{equation} and \begin{equation}\label{b''} \chi_k(\B_n'';t_1,\dots ,t_k)=\Big(t_1(t_2(\cdots(t_{k-1}(t_k-1)+1)\cdots +(-1)^{k-1})+(-1)^k\Big)^{n-1}\end{equation}

Hence if we were to have a deletion restriction formula then it would have to be of the form \begin{equation}\label{try1} \chi_k(\A;t_1,\dots ,t_k)=\chi_k(\A';t_1,\dots ,t_k)-\Bigg( \sum\limits_{i=0}^{k-1}(-1)^i\prod\limits_{j=1}^{k-1-i}t_j\Bigg)\chi_k(\A'';t_1,\cdots ,t_k) \end{equation} where $\A'$ and $\A''$ are the deletion and restriction respectively. Note that if (\ref{try1}) were true it would generalize the usual deletion-restriction formula with $k=1$ (see \cite{OT}).

However, the next example shows that this formula is not satisfied even for $k=2$ on a rank 2 matroid. 

\begin{example}

Let $\A$ be the arrangement of 3 hyperplanes in rank 2. So, $\A$ has intersection lattice of that in Example \ref{3rank2}. Then the 2nd characteristic polynomial is $$\gc_2(\A;t_1,t_2)=t_1^2t_2^2-3t_1^2t_2+2t_1^2+3t_1t_2-6t_1+4.$$ Since the deletion $\A'$ is Boolean of rank 2 we have that $\gc_2(\A';t_1,t_2)=(t_1(t_2-1)-1)^2$. The restriction $\A''$ is $\B_1$ hence $\gc_2(\A'';t_1,t_2)=t_1(t_2-1)+1$. Now if we insert these into the formula (\ref{try1}) we get \begin{align*}\gc_2(\A';t_1,t_2)-(t_1-1)\gc_2(\A'';t_1,t_2) &=(t_1(t_2-1)+1)^2-(t_1-1)(t_1(t_2-1)+1)\\
&=t_1^2t_2^2-3t_1^2t_2+2t_1^2+3t_1t_2-4t_1+2\end{align*} which has the last two terms different than $\gc_2(\A;t_1,t_2)$.\end{example}

\begin{remark}
It seems interesting that this fails for such a simple example.  Since the characteristic polynomial satisfies the product formula of Proposition \ref{prodchar}, it would be an evaluation of the Tutte polynomial (see for example \cite{B10}). In this sense these characteristic polynomials are new invariants. However, given the complexity of the formulas even for Boolean matroids (Proposition \ref{booleanform}) the utility of these polynomials is probably low.
\end{remark}

\begin{remark}

After considering these new invariants the next step would be to understand them in the frame work of Dupont, Fink, and Moci in \cite{DFM-19}. Also, one should decide whether or not they are valuations and the set up of Ardila and Sanchez in \cite{Ardila-Sanchez} seems particularly fitting.

\end{remark}

\bibliographystyle{amsplain}


\providecommand{\bysame}{\leavevmode\hbox to3em{\hrulefill}\thinspace}
\providecommand{\MR}{\relax\ifhmode\unskip\space\fi MR }
\providecommand{\MRhref}[2]{%
  \href{http://www.ams.org/mathscinet-getitem?mr=#1}{#2}
}
\providecommand{\href}[2]{#2}

\end{document}